
%
 \documentclass{ajour}
%
%
%
%
%
%
%
%
%

\usepackage{epsf}
\usepackage{latexsym}
\usepackage{amsfonts}
\usepackage{amssymb}

\begin{document}

%


 \authorrunninghead{M.C. de Bondt}
 \titlerunninghead{Jeep variants}





\title{Jeep variants}

\subtitle{Applications of convoy formulations}

\author{Michiel de Bondt}
\affil{Department of mathematics, University of Nijmegen, \\
       Toernooiveld 1, 6525 ED Nijmegen, The Netherlands}

\email{MichieldeB@netscape.net}

%







\abstract{The jeep problem was first solved by O. Helmer
and N.J. Fine.
But not much later, C.G. Phipps formulated a more general
solution. He formulated a so-called convoy or caravan variant
of the jeep problem and reduced the original problem to it.

The convoy idea of Phipps was refined in \cite{opjc}. Here we will
apply this refined idea to several variants of the jeep problem.}

\keywords{The jeep problem; Phipps' jeep caravans; Dewdney jeeps.}

\begin{article}


\section{A FINE JEEP AND DEPOTS TO BE USED} 
\label{Finesec}

Suppose we have a jeep that has a fuel capacity of one tankload of fuel, 
which can ride one distance unit per tankload. Furthermore, the jeep
may set up depots of fuel on any position in the desert for future use.
We call such a jeep from now on a {\em Fine jeep}.

Say we have a Fine jeep that must cross a desert in order to reach an oasis, and 
possibly return to the desert border afterwards. How can this be achieved with a 
minimal amount of fuel, which is available at the desert border? This problem was 
solved first in \cite {helmer} and \cite{fine}.

We now consider the problem of a Fine jeep crossing the desert in order to
reach an oasis, with
both depots to be used and depots to be filled. Although we assume that
there is only one jeep, we formulate the algorithm as a {\em backward convoy
algorithm}, which has been introduced in \cite{opjc} using ideas of \cite{phipps}. 
The algorithm is both for the outward trip case and the return
trip case. For the moment, we assume that the jeep has to end at the oasis 
finally in the outward trip case. 

One of the aspects of a backward convoy algorithm is that compared to a 
normal algorithm, time is in fact eliminated. But in case of depots to
be used, time might matter, since some depot must be reached first, before
its fuel can be used. So in general, there are positions where fuel is
more scarce before than after some depot is reached.

To take this into account, we will split the backward convoy into two 
parts at some depots. One part is the reaching part: a {\em forward refueling
sub\-convoy} with relatively less fuel. The other part is the using part: 
a {\em backward refueling sub\-convoy} with 
relatively more fuel. In case of a round trip to the oasis, one of the 
double jeeps
splits in fact in two single jeeps: one for each sub\-convoy. The single jeep
for the backward refueling convoy is in fact a returning single jeep and 
therefore called a {\em ringle jeep}. The other single jeep is just called
a single jeep.

In a backward convoy, the forward refueling sub\-convoy has at most one 
tankload of fuel, but on the
contrary, the backward refueling sub\-convoy has at most one tankload of 
{\em emptiness} in its tanks. As soon as the backward refueling 
sub\-convoy gets one tankload of emptiness, one of its jeeps is canceled.
This canceling takes only one tankload of fuel, and therefore eliminates all
emptiness in the backward refueling convoy. 

For an easier formulation, we assume by definition that there is always a
forward and a backward refueling sub\-convoy. The backward refueling sub\-convoy
is improper if it is reduced to a convoy without jeeps or an empty ringle 
jeep.

\begin{algo} \label{Finedep}
Start with a single jeep with one tankload of fuel and possibly an empty 
ringle jeep. If there is fuel at the oasis, then do the handler of event 1
first. Ride to the desert border. Each time the forward refueling sub\-convoy
gets out of fuel, a double jeep with one tankload of fuel is added to it. A 
backward refueling sub\-convoy that only consists of one empty ringle jeep
uses fuel of the forward refueling sub\-convoy.

\begin{em}
Event 1: The convoy meets a depot with fuel.
\end{em} \\
{\em Handler:} 
The forward refueling sub\-convoy absorbs as much fuel as possible, but each 
time this sub\-convoy gets more than one tankload of tank fuel, it cancels a 
double jeep with one tankload of fuel. If there is more fuel than the forward 
refueling sub\-convoy can accept, then we call the current position a {\em
saturation point}. At a saturation point, the backward refueling sub\-convoy
absorbs all remaining fuel. It creates a double jeep with one tankload of fuel 
each time it gets more fuel than it can absorb. This new double jeep with one
tankload of fuel can absorb another tankload of fuel, since its tankfuel 
capacity is two tankloads.

If fuel need to be restored at the current position for the party 
organization, or -- even worse -- more fuel has to be put on the current 
position than there was before this handler, then do the handler of event 
2 first. Ride to the desert border with both sub\-convoys. The backward 
refueling sub\-convoy cancels a double jeep with one tankload of fuel each time 
it gets more than one tankload of tank emptiness, since then one double jeep 
less is required to transport all fuel. 

\begin{em}
Event 2: The convoy meets a depot to be filled.
\end{em} \\
{\em Handler:}
Use fuel of the backward refueling sub\-convoy, canceling a double jeep with
one tankload of fuel each time this sub\-convoy gets more than one tankload of tank
emptiness. If more fuel is needed than the backward refueling sub\-convoy can 
give, then the forward refueling sub\-convoy must give the remaining fuel. To do 
this, it creates as many double jeeps with one tankload of fuel each as
necessary.

Eventually at the desert border, the fuel tanks of the forward refueling 
subconvoy are filled up to a level of one tankload. The fuel of the tanks of the
backward convoy above the level of one tankload, as well as all tank fuel from 
the ringle jeep, can be returned at the desert border if that is allowed.
\end{algo}

In case there is a position where fuel can be used but where fuel
has to be put as well, then possibly double jeeps are created that are 
canceled on the same position in algorithm \ref{Finedep}. But an 
algorithm in which this does not happen needs more words. Furthermore,
a position may be a saturation point, even if on balance, it requires
fuel. 

Before we prove the optimality of algorithm \ref{Finedep}, we formulate
a trivial but very important result, which is used implicitly in
\cite{roundtrip} as well.

\begin{proposition}[Split Lemma]
Suppose we have a Fine jeep that rides from $a$ to $b$, using fuel from 
the desert and passing $x$ one or more times. Suppose that this jeep
executes some fuel transportations as well along the way. 

Then the
same consumptions and transportations of fuel can be done by two Fine
jeeps in the following way. One jeep rides from $\min\{a,x\}$ to 
$\min\{b,x\}$ and the other jeep rides from $\max\{a,x\}$ to 
$\max\{b,x\}$, both without passing $x$. If we allow borrowing fuel
at $x$ that is not yet carried to $x$, then both jeeps may ride after 
each other in any order.
\end{proposition}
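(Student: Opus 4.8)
\medskip\noindent\textbf{Proof sketch (proposed).}
The plan is to regard the trip of the single jeep as parametrized by time and to cut it into its \emph{left part} (where the jeep sits in $(-\infty,x]$) and its \emph{right part} (where it sits in $[x,\infty)$), and to show that these two parts are independent apart from a single fuel balance at $x$. First I would fix a time parametrization $t\mapsto p(t)$, $t\in[0,T]$, of the jeep's position together with its tank content, so that ``the same consumptions and transportations'' acquires a precise meaning: every elementary move, every pick-up of fuel from the desert and every deposition happens at a definite time and position. Since the jeep makes finitely many elementary moves, $p$ is piecewise monotone, so the closed sets $\{t:p(t)\le x\}$ and $\{t:p(t)\ge x\}$ have finitely many components; together they partition $[0,T]$ into maximal intervals $I_1,\dots,I_n$ that alternate between \emph{left intervals} ($p\le x$) and \emph{right intervals} ($p\ge x$), with $p=x$ at every junction between consecutive intervals. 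Because $a\in I_1$ and $b\in I_n$, the first interval is a left interval iff $a\le x$, and likewise for the last interval and $b$.

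Next I would build the two jeeps. Jeep~$1$ copies the original jeep verbatim during every left interval and rests at $x$ during every right interval; jeep~$2$ does the mirror image. Every pick-up and every deposition performed by the original jeep at a position $<x$ is performed, identically, by jeep~$1$; every such action at a position $>x$ by jeep~$2$; every such action exactly at $x$ is assigned to whichever jeep is present. Finally, at each junction time the amount of tank fuel that the original jeep carries across $x$ is transferred between the jeeps: from jeep~$1$ to jeep~$2$ at a left-to-right junction, from jeep~$2$ to jeep~$1$ at a right-to-left junction. By construction jeep~$1$ never leaves $(-\infty,x]$, starts at $\min\{a,x\}$ and ends at $\min\{b,x\}$, and symmetrically jeep~$2$ stays in $[x,\infty)$ and runs from $\max\{a,x\}$ to $\max\{b,x\}$; and the union of the two jeeps' actions at positions $\ne x$, together with the actions assigned at $x$, reproduces exactly the consumptions and transportations of the original jeep.

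It then remains to handle the timing of the fuel exchanges at $x$. If the two jeeps act in the original interleaved order --- jeep~$1$ during the left intervals, jeep~$2$ during the right intervals, taken in the order $I_1,\dots,I_n$ --- then each exchange at $x$ uses fuel that is physically present, mediated by a depot at $x$, so no borrowing is needed. If instead one wants the two trips to run consecutively in an arbitrary order, a jeep may need fuel at $x$ before the other jeep has carried it there; allowing the depot at $x$ to go temporarily negative, i.e.\ permitting a jeep to borrow at $x$ fuel not yet carried to $x$, removes this obstruction, and since only the net fuel flow through $x$ enters the bookkeeping, the order of the two trips then becomes irrelevant.

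The only genuine work is this bookkeeping: pinning down ``same consumptions and transportations'' through the time parametrization and verifying that the junction transfers balance. Once that framework is in place the splitting is essentially tautological --- which is exactly why the authors call the result trivial --- and the subtle point worth isolating is precisely the borrowing clause, which is what decouples the \emph{order} of the two trips from their \emph{content}.
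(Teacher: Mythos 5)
Your argument is correct, and there is nothing in the paper to compare it against: the authors literally write ``The proof is left as an exercise to the reader,'' so your time-decomposition into alternating left/right intervals, with a depot at $x$ mediating the tank fuel carried across at each junction, supplies exactly the standard argument the paper omits. The one detail worth making explicit is that each of the two new jeeps still respects the one-tankload capacity: during its active intervals its tank content coincides with that of the original jeep, and at each junction it hands its entire tank over to the depot at $x$ (so it rests empty), hence no capacity violation can arise. With that remark added, the proof is complete, and your closing observation --- that the borrowing clause is precisely what makes the order of the two trips irrelevant, since only the net flow through $x$ matters --- is the right way to read the second half of the statement.
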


\begin{proof}
The proof is left as an exercise to the reader.
\end{proof}

Let a {\em rejoining point} be a position where the backward refueling
sub\-convoy gets improper. Now we are ready for our final theorem.

\begin{theorem} \label{Finedepth}
Algorithm \ref{Finedep} is optimal and can be executed as a normal algorithm.
\end{theorem}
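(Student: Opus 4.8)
\emph{Sketch of the intended proof.}
The plan is to reduce both assertions to a position-by-position analysis by repeatedly invoking the Split Lemma, and then to run an induction on the number of special positions, namely the desert border, the oasis, the depots, the saturation points and the rejoining points. Cutting the (fictitious) convoy at a special position $x$ splits it into the part on the border side and the part on the oasis side, each again governed by a backward convoy algorithm on a shorter desert; in the round-trip case a double jeep splits at $x$ into a single jeep and a ringle jeep, exactly as the last paragraph of Algorithm~\ref{Finedep} prescribes. So it suffices to understand one interval between consecutive special positions at a time, and to check that the boundary data produced at one end matches the boundary data required at the adjacent interval.

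For optimality I would first record the lower bound. On an interval $[u,v]$ with no special position in its interior, the task is an ordinary Fine-jeep transport problem with prescribed amounts of fuel (and of emptiness, in the return-trip reading) to be delivered across $u$ and across $v$; a standard convexity and exchange argument — transport a differential amount of fuel one step and count the jeeps that must carry it — shows that the fuel consumed on $[u,v]$ is bounded below by the usual convoy integral, with equality iff no fuel is ever spilled and the cheapest available fuel is always used first. Algorithm~\ref{Finedep} does exactly this: the forward refueling subconvoy is always kept at the least tank fuel that still lets it reach the next depot, it sheds a double jeep the instant it would otherwise carry more than one tankload, and at a saturation point the surplus is passed to the backward refueling subconvoy, which likewise sheds a double jeep as soon as it accumulates one tankload of emptiness; no handler ever leaves unused fuel behind except the returnable surplus at the border. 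Hence on every interval the algorithm attains the bound, the bounds add up over the intervals by the Split Lemma, and the algorithm is optimal.

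For the second claim I would run the Split Lemma in the opposite direction: the collection of single-jeep trips obtained by cutting at every special position can be re-assembled into a journey of one physical jeep, provided these trips can be ordered in real time so that the one-tankload capacity is never violated, no depot is tapped before the jeep has first reached it, and no depot to be filled is drawn on before its fuel has been brought there. Capacity is immediate from the convoy bookkeeping. For the time-ordering, note that a rejoining point is by definition a place where the backward refueling subconvoy becomes improper, i.e.\ from there toward the border the forward subconvoy carries everything by itself; the journey therefore decomposes at the rejoining points into blocks that can be executed one after another, and inside a block the jeep first performs the forward-subconvoy shuttles outward — which only ever consume border fuel or fuel of depots already passed — until every depot of the block has been reached, and afterwards performs the backward-subconvoy run inward, which consumes the depot fuel and fills the depots to be filled.

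The step I expect to be the main obstacle is precisely the acyclicity of this schedule: one must rule out a circular dependence in which the fuel of one depot is needed to reach a second depot whose fuel is in turn needed to reach the first. The argument that no such cycle exists is the whole point of splitting the backward convoy into a forward and a backward refueling subconvoy — the forward one is defined to carry only what is needed for \emph{reaching}, never for \emph{using} — together with the rejoining-point decomposition, and most of the work would go into making this precise: showing that along the outward trip the forward subconvoy's fuel requirement varies monotonically enough that each depot can be reached using only fuel of depots strictly closer to the border, or of a saturation point already established, so that the blocks between rejoining points are genuinely independent and each is internally schedulable. Once that is in hand, assembling the single-jeep timetable and verifying the three conditions above is routine, and since optimality has already been established, the theorem follows.
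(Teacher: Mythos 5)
Your overall skeleton --- cut at the special positions with the Split Lemma, prove optimality interval by interval, and schedule the normal algorithm in blocks delimited by the rejoining points --- is the same as the paper's, and your identification of the scheduling obstacle (no circular dependence between reaching a depot and using its fuel) is exactly what the paper's ordering $T_{(j-1)1}\ldots T_{i1}\,T_{i21}\ldots T_{(j-1)21}\,T_{(j-1)22}\ldots T_{i22}$ is designed to handle. But there is a genuine gap in your optimality argument. On an interval $[G_{i+1},G_i]$ whose far endpoint $G_i$ carries a depot to be used, the minimum fuel consumption is \emph{not} the ``usual convoy integral'' of an ordinary transport problem with prescribed net amounts crossing $G_{i+1}$ and $G_i$: that time-free bound ignores the constraint that the depot at $G_i$ can only be tapped after the jeep has first reached $G_i$, and Algorithm~\ref{Finedep} does not attain it --- the whole reason for the forward refueling subconvoy is that extra fuel must be spent on \emph{reaching}. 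So the lower bound you propose is one the algorithm fails to meet, and the ``bounds add up'' step collapses. The correct per-interval argument, as in the paper, splits an arbitrary competitor $S_i$ into a part $S_{i1}$ before first reaching $G_i$ and a part $S_{i2}$ after, compares these separately with the forward and backward subconvoys $T_{i1}$ and $T_{i2}$, and --- crucially --- uses the fact that fuel molecules need never cross in a backward convoy to reduce to the case where in $S_{i2}$ either no $G_i$-fuel reaches $G_{i+1}$ or only $G_i$-fuel is used; this is what rules out a competitor profiting from blending the reaching and using phases, and nothing in your exchange argument excludes such blending.

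Two secondary points. First, the induction that transfers boundary data between intervals (in $T_{i-1}$ at least as much fuel is dumped at $G_i$, or at most as much taken from it, as in $S_{i-1}$, so that $T_i$ faces a task no harder than $S_i$) is only gestured at in your first paragraph; it must be set up explicitly, since the ``prescribed amounts'' on each interval are produced by the algorithm itself and differ between $T$ and $S$. Second, on the scheduling side, within a block of intervals where the backward subconvoy is proper, running all forward shuttles and then simply ``the backward-subconvoy run inward'' is not quite enough: each backward part $T_{k2}$ must itself be split at the moment it last reaches $G_{k+1}$ and the pieces interleaved so that the fuel level at each intermediate special point first only increases and then only decreases; otherwise temporary underflows can still occur.
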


\begin{proof}
Let $S$ be a normal algorithm where the jeep reaches the oasis, 
and $T$ be an instance of algorithm \ref{Finedep} that has the same effect 
on fuel depots as $S$, with a ringle jeep if and only if $S$ is a round trip,
but in which more fuel might be used from the desert border. 
We must show that no more fuel is used from
the desert border in $T$. Furthermore, we must show that $T$ can be executed 
as a normal algorithm as well.

We call the following positions {\em special}:
\begin{itemize}

\item Positions with fuel and rejoining points of $T$,

\item The desert border and the oasis.

\end{itemize}      
Let $G = \{G_1, G_2, \ldots, G_r\}$ be the set of special points with 
$G_1 > G_2 > \cdots > G_r = 0$.

Split $S$ into parts $S_i$ on $[G_{i+1}, G_i]$ by way of the split lemma. 
Split $T$ into parts $T_i$ on $[G_{i+1}, G_i]$. First, we prove that
$T$ is optimal. After that, we show that $T$ can be transformed into a normal
algorithm.

In order to prove that $T$ is optimal, we show that each of its parts
$T_i$ is optimal. Assume by induction that $T_1, \ldots, T_{i-1}$ are optimal.
Then in $T_{i-1}$, at least as much fuel is dumped on $G_i$ or at most as 
much fuel is taken from $G_i$ on balance as in $S_{i-1}$. So in $T_i$,
at most as much fuel need to be dumped on $G_i$ or at most as 
much fuel may be taken from $G_i$ on balance as in $S_i$.

In case there is no proper backward convoy on the interval $(G_{i+1},G_i)$ in 
$T$, it follows from \cite[Th.\@ 4.1]{opjc} that we can change $S_i$ into a normal 
algorithm that follows algorithm \ref{Finedep}. So assume that there is a 
proper backward convoy on the interval $(G_{i+1},G_i)$ in $T$. 

There are normal algorithms $T_{i1}$ and $T_{i2}$ corresponding to the 
forward and backward refueling subconvoy of $T$ respectively over the interval 
$[G_{i+1},\allowbreak G_i]$. Since there is no rejoining
point between $G_{i+1}$ and $G_i$ exclusive, no fuel is taken from 
position $G_{i+1}$ in $T_{i2}$.

$S_i$ can be split into a part $S_{i1}$ before reaching $G_i$ for the first 
time and a part $S_{i2}$ after that. If we apply \cite[Th.\@ 4.1]{opjc} on $S_{i1}$, with 
the desert border and the oasis replaced by $G_{i+1}$ and $G_i$ respectively, we see 
that in $T_{i1}$, no more fuel is used as in $S_{i1}$. 

If we apply \cite[Th.\@ 4.1]{opjc} on $S_{i2}$, with the desert border and the oasis 
replaced by $G_i$ and $G_{i+1}$ respectively, we see that we may assume that $S_{i2}$ is
derived from a backward convoy algorithm. In a backward convoy algorithm,
and hence in $S_{i2}$, fuel molecules do not need to cross each other. 
So we may assume that in $S_{i2}$, either $G_i$-fuel does not reach $G_{i+1}$ 
or only $G_i$-fuel is used.

In the former case, $T_{i2}$ is at least as economical as $S_{i2}$, since no 
$G_{i+1}$-fuel is used in $T_{i2}$. In the latter case, it follows from 
\cite[Th.\@ 4.1]{opjc} that the amount of $G_i$-fuel of $T$ and $S$ which
reaches $G_{i+1}$ is at least 
as large in $T_{i2}$ as in $S_{i2}$. It follows that $T_i$ is optimal.

So the optimality of $T$ follows by induction. Next, we need to show that 
$T$ can be executed as a normal algorithm. Notice that each of the parts
$T_{ij}$ can be executed as a normal algorithm. The only problem is to combine
them such that on the special points $G_i$, no temporary underflows of fuel
occur. Assume first that $S$ is a two-way trip. Then $T$ can be executed as a 
normal algorithm in the order $T_{(r-1)1} \ldots T_{11} \allowbreak T_{12} 
\ldots T_{(r-1)2}$. This is because the forward parts $T_{(r-1)1} \ldots 
T_{11}$ are constructed in such a way that only present fuel of the point $G_i$
is used, and in the backward parts $T_{12} \ldots T_{(r-1)2}$, the amount of
fuel first only increases and then only decreases on the points $G_i$. 

In case $S$ is a one-way trip, things are more difficult. Assume first that
in $(G_{i+1},G_i)$, the backward refueling subconvoy is improper. If we do
$T$ in the order $T_{r-1}, \ldots, T_{i+1}, T_i, T_{i-1}, \ldots, T_1$, then 
temporary underflows of fuel might occur, but not during $T_i$. In order to
remove all temporary underflows, we only need to look at blocks of intervals
$(G_{i+1},G_i)$ where the backward convoy is proper. 

Let $(G_j,G_{j-1}),
\ldots, (G_{i+1},G_i)$ be such a block. Split $T_{k2}$ into a part $T_{k21}$ 
before reaching $G_{k+1}$ for the last
time and a part $T_{k22}$ thereafter, and execute $T_{j-1},\ldots,T_i$ in the 
order $T_{(j-1)1} \ldots T_{i1} \allowbreak T_{i21} \ldots T_{(j-1)21} 
\allowbreak T_{(j-1)22} \ldots T_{i22}$. This way, no temporary underflows
occur in $T_{j-1},\ldots,T_i$, as can be shown with essentially the same
arguments as in the round trip case.
\end{proof}

In case the jeep does not need to end at the oasis or the desert border,
the optimal algorithm is the best of the following.
\begin{enumerate}

\item The outward trip variant of \ref{Finedep}.

\item The round trip variant of \ref{Finedep}. 

\item All variants of \ref{Finedep} that start with a ringle jeep, but where
at some rejoining point, the ringle jeep is thrown away. 

\end{enumerate}
The reader may show this.
In the normal algorithm corresponding to 3., the jeep ends where the 
ringle jeep is thrown away.

\section{A CONVOY ALGORITHM FOR DEWDNEY JEEPS}

We already met Dewdney jeeps in Maddex' jeep problem. A Dewdney jeep has a tank
of one unit and in addition, it can transport $B$ cans of $C$ units each. 
The depots must be made of cans and only can fuel may be used to fill them.

We consider the problem of crossing a desert of $d$ miles $n = n_1 + n_2$ 
times to reach an oasis on position $d$, of which exactly $n_2$ times in 
both ways. There are $n + m$ Dewdney jeeps to do so, of which exactly 
$n_1 + m_1$ jeeps do not need to return to the desert border finally.
Instead, they may end up anywhere in the desert.
 
In the backward convoy algorithm for normal jeeps, we did not 
care in what jeep fuel was transported, since there were no restrictions
on transferring fuel from one jeep to another. In this sense, fuel
was global, i.e.\@ belonged to the whole convoy. In the backward convoy
formulation with Dewdney jeeps, we will only see the can fuel
as global fuel. But fuel in a jeep's fuel tank is local to that jeep.

We allow jeeps to consume can fuel directly, instead of by way of
their fuel tank. This is no problem, since instead the jeeps can get
a little fuel from the cans such that the convoy can advance a little
farther. Despite the fact that tank fuel is local, we will allow jeeps to 
consume tank fuel from other jeeps in a {\em backward convoy algorithm for 
Dewdney jeeps}, but only if the receiving jeep is created at the lowest 
position. 
The idea behind this is the following: the receiving jeep is actually created 
on an even later moment in the backward convoy and its transportation before its 
real creation is done by means of to and fro's of the jeep which tank fuel is 
used. 

Furthermore, we allow double jeeps to be replaced by single jeeps in the
backward convoy algorithm for Dewdney jeeps. If such a replacement takes place
on position $p$, then the following happens in a normal algorithm:
the jeep rides from position $0$ to a position farther than $p$ first and
returns to position $p$ finally.

If on a position, both a single and a double jeep are created, then the double
jeep may consume fuel from the tank of the single jeep.
So the fuel of the backward convoy can be ordered by applicability. 
The tank fuel of the jeep that is created the last in the backward convoy 
is the least applicable, and the can fuel as well as the tank fuel of the $n_1$
initial single jeeps is the most applicable, provided $n_1 > 0$. 

If we include fuel depots with fuel to be used in the backward convoy
algorithm for Dewdney jeeps, then cans may become scarce, just as with 
Maddex' jeep problem. A solution is to use jeeps that have a tank 
capacity of one unit and can transport $BC$ units of fuel in addition, 
fuel that may be put in any proportion on the desert just as the fuel of 
Fine jeeps. Next, we could try to formulate an algorithm for one jeep
just as in section \ref{Finesec}.

But besides saturation points, there is other trouble that can
occur. Suppose we have a depot with fuel to be used. Now it seems optimal to
use that fuel to cancel as many double jeeps as possible. But in order to 
do that, these double jeeps should be able to use all tank fuel available.
This is impossible, since tank fuel is not global. For this reason, we do not
include depots with fuel to be used in the backward convoy algorithm for 
Dewdney jeeps. 

\begin{algo} \label{Dewdneyjeep}
Start with a convoy at position $d$ with $n_1$ single jeeps and 
$n_2$ double jeeps initially, all jeeps with $B$ full cans of
$C$ units of fuel each and one unit of tank fuel.
If a depot has to be filled at position $d$, then 
call the handler of event 1 first. After that, ride to position $0$ 
with the whole convoy. Each jeeps consumes the least applicable fuel it can
consume, so a jeep use its own tank fuel first, then other tank fuel and at 
last can fuel.

\begin{em}
Event 1: The convoy meets a position where a depot has to be filled.
\end{em} \\
{\em Handler:} Use can fuel to make the depot. If there is not
enough fuel to make the depot, then call the handler of event 2 first
and then repeat this handler. Otherwise, advance to $0$.

\begin{em}
Event 2: There is at least one jeep that can not ride farther any more, due
to lack of fuel.
\end{em} \\
{\em Handler:} Create a new double jeep with one unit of tank
fuel and $B$ full cans of $C$ units of fuel each.
If the convoy has less than $m_1 + n_1$ single jeeps now, then replace 
the oldest double jeep in the backward convoy by a single jeep.

Eventually at the desert border, the amount of fuel of all jeeps is made 
equal to one unit. So the result of the algorithm is the number of jeeps
minus the final amount of fuel.
\end{algo}

\section{OPTIMALITY RESULTS FOR CONVOYS OF DEWDNEY JEEPS}

In the backward convoy algorithm for Fine jeeps in \cite{opjc}, 
it was possible to remove double jeeps in an extended convoy algorithm 
if they contained exactly one unit
of fuel in their tank. Since the amount of tank fuel can not be
controlled with Dewdney jeeps, other methods are needed to remove
double jeeps, which we call {\em jeep merges}. We distinguish
two jeep merges:
\begin{enumerate}

\item A single jeep and a double jeep with $r$ units of tank fuel
      together merge to a single jeep with $r - 1$ units of fuel, 
      where $1 \le r \le 2$.
      
\item Two double jeeps with $r$ units of tank fuel together
      merge to a double jeep with $r - 1$ units of fuel, where
      $1 \le r \le 3$.
      
\end{enumerate}
For each of both merges, $B$ full cans of fuel has to be payed.
We allow these jeep merges in an {\em extended backward convoy algorithm for 
Dewdney jeeps}, but we do not allow jeeps to consume tank fuel of other jeeps
now. Although cans may be transported in backward direction in a normal 
algorithm, we do not need double jeeps to carry such things as anti-cans,
since each such can must be transported in forward direction first. Thus
with time eliminated, we can demand that the number of cans each jeep 
carries lies between $0$ and $B$ inclusive in an extended backward convoy 
algorithm.

\begin{figure}[ht] \label{merges}

\begin{center}
\begin{picture}(300,340)(0,0)

\put(63,-5){\epsfbox{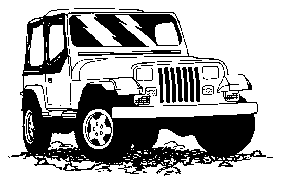}}
\put(198,55){\epsfbox{jeep.ps}}
\put(138,115){\epsfbox{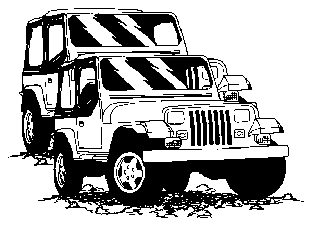}}
\put(220,185){\epsfbox{jeep.ps}}
\put(10,245){\epsfbox{jeeps.ps}}
\put(95,275){\epsfbox{jeeps.ps}}
\put(185,275){\epsfbox{jeeps.ps}}

\put(0,0){\makebox(0,0){$\bullet$}}
\put(1.5,6){\vector(1,4){57}}
\put(60,240){\makebox(0,0){$\bullet$}}
\put(60,60){\dashbox(0,180)}
\put(61.5,234){\vector(1,-4){42}}
\put(60,60){\dashbox(180,0)}
\put(105,60){\makebox(0,0){$\bullet$}}
\put(105,40){\dashbox(0,20)}
\put(106.5,66){\vector(1,4){49.5}}
\put(157.5,270){\makebox(0,0){$\bullet$}}
\put(157.5,180){\dashbox(0,90)}
\put(159,264){\vector(1,-4){19.5}}
\put(180,180){\makebox(0,0){$\bullet$}}
\put(180,172){\dashbox(0,8)}
\put(157.5,180){\dashbox(45,0)}
\put(181.5,186){\vector(1,4){19.5}}
\put(202.5,270){\makebox(0,0){$\bullet$}}
\put(202.5,180){\dashbox(0,90)}
\put(204,264){\vector(1,-4){34.5}}
\put(240,120){\makebox(0,0){$\bullet$}}
\put(240,100){\dashbox(0,20)}
\put(180,120){\dashbox(75,0)}
\put(241.5,126){\vector(1,4){12}}
\put(255,180){\makebox(0,0){$\bullet$}}
\put(255,120){\dashbox(0,60)}

\end{picture}
\end{center}

\caption{An illustration of several types of jeep merges}
\end{figure}

Suppose that some jeep turns from backward to forward at some position
$p > 0$. If that jeep passes $p$ after this turn, then this turn corresponds
to merge 2, otherwise it corresponds to merge 1. Some jeep merges are
illustrated in figure \ref{merges}. A jeep that rides back to
$p > 0$ and stays there corresponds to a creation of a single jeep at $p$,
immediately followed by a merge of type 1 at position $p$. 

\begin{proposition}
A normal algorithm for Dewdney jeeps can be transformed to an
extended backward convoy algorithm for Dewdney jeeps.
\end{proposition}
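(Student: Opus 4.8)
The plan is to take the given normal algorithm $S$, run it backward in time, and check that the resulting schedule --- after a little surgery --- is an extended backward convoy algorithm. Time-reversal turns every forward leg of a jeep into a backward leg and conversely, turns the burning of fuel into the gaining of fuel, and turns the filling of a depot into the emptying of one; since a can is a physical object conserved through the run, can fuel stays can fuel and tank fuel stays tank fuel. Globally the ensemble of $n+m$ jeeps now moves from the oasis toward the desert border, the direction wanted for a backward convoy. The jeeps that finished $S$ at the oasis become the initial convoy --- the $n_1$ single and $n_2$ double jeeps with full cans and one unit of tank fuel; the $n_1+m_1$ jeeps that finished $S$ somewhere in the desert show up, read backward, as double-to-single replacements at those positions (read forward, such a jeep drove out past $p$, came back to $p$, and parked, which is exactly the maneuver the paper attaches to a single-jeep creation at $p$ followed by a type-1 merge); and every jeep created en route is the time-reversal of a jeep that in $S$ waited at the border until first deployed.

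Next I would decompose each jeep's trajectory in $S$ into maximal monotone legs and read off what happens at the turning points. A turn from backward to forward at a point $p>0$ in the reversed picture is, by the discussion preceding the proposition, a jeep merge --- type 2 or type 1 according to whether the jeep afterward passes $p$ again --- and the remaining turns are creations or deployments as above. The work here is the capacity bookkeeping of the merges: merge 1 combines a single and a double jeep, merge 2 combines two double jeeps, $B$ full cans are spent in each, and the surviving jeep is left with $r-1$ units of tank fuel where the amount $r$ brought in satisfies $1\le r\le2$ resp.\ $1\le r\le3$. Because a Dewdney tank never holds more than one unit, the tank contents present at a turn of a single jeep and at a turn of a double jeep fall inside these ranges, so the decomposition points forced by the merges really are the turning points of $S$ and nothing is lost.

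It then remains to remove the two features of $S$ that an extended backward convoy algorithm forbids: transfer of tank fuel between jeeps, and (read backward) a jeep ever carrying a number of cans outside $[0,B]$. For the cans, the observation already recorded in the paper does the work --- in $S$ a can must be carried forward before it can be carried backward, so once the schedule is reindexed by position instead of time the net number of cans on any jeep stays in $[0,B]$; one makes this precise by pairing each backward can-move with an earlier forward move of the same can and cancelling the pair in the position-indexed picture. For the tank fuel I would use the split-lemma idea together with the interchangeability of the jeeps: two jeeps that meet and pour part of one tank into the other can instead swap identities for the rest of the schedule --- the return / no-return label may be reassigned, since only the count $n_1+m_1$ matters --- so every partial transfer disappears, and the only surviving inter-jeep tank flow is a jeep handing over all its residual tank fuel as it turns or parks, which the previous step has already rewritten as a merge. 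Depots need nothing extra: they are built from can fuel in $S$, can fuel is global, and reversal preserves can-ness, so the reversed depot interactions again use only can fuel.

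The step I expect to be the real obstacle is the tank-fuel one --- making the swap argument airtight: that the swaps can be carried out in a consistent order, that they never violate a tank capacity or produce negative tank fuel at any position, and that they mesh correctly with the merges introduced at the turning points. The monotone-leg/merge part and the can-count part are close in spirit to \cite[Th.\@ 4.1]{opjc} and the uses made of it above, and I would lean on that analogy. Once the tank-fuel step is settled, reading off the surviving schedule --- initial convoy at the oasis, creations and double-to-single replacements at the right positions, merges at the former turning points, can fuel used freely while tank fuel is used only within its own jeep, and every jeep brought to one unit at the border --- exhibits the required extended backward convoy algorithm $T$, and since time-reversal conserves total fuel this $T$ is a faithful accounting of the fuel $S$ uses.
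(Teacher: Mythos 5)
The paper does not actually prove this proposition: it defers to the analogous statement for Fine jeeps in \cite[Prop.\@ 3.1]{opjc} and omits the argument. Your sketch is in the right spirit of what that omitted argument must do --- reindex each physical jeep's trajectory by position, decompose it into monotone legs, and read the turning points as merges and the parking points as creation-plus-merge --- but two of your steps do not hold up. First, the obstacle you single out as ``the real obstacle,'' eliminating inter-jeep tank-fuel transfers by identity swaps, is a phantom: in the Dewdney model tank fuel is local to each physical jeep by definition of the problem, so a normal algorithm contains no such transfers to eliminate. The genuine issue that the extended backward convoy's merge rules are designed to capture is different, namely that one physical jeep appears as \emph{several simultaneous virtual jeeps} over the same position interval (its outbound leg and its return leg coexist in the position-indexed picture), and its single tank's contents must be distributed consistently among these virtual copies; that is what the quantity $r$ and the output $r-1$ in the two merge types encode, and it is exactly the bookkeeping you wave through in your second paragraph. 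Note also that the construction is a reindexing by position, not a time reversal --- a time-reversed schedule still has one copy of each jeep at each instant, whereas the backward convoy has one virtual jeep per traversal of each point.

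Second, and more seriously, you never account for the $B$ full cans that each merge must pay. If a turning point of the normal algorithm is to become a legal merge in the extended backward convoy, those $B$ cans must come from somewhere, and the resolution is that every merge is paired with a simultaneous creation of a virtual jeep endowed with $B$ full cans, so that the payment and the endowment cancel; the paper relies on precisely this pairing in its optimality proof of Theorem \ref{Dewdneyconvoy} (``jeep merges always coincide with jeep creations''), and the correspondence stated just before the proposition (``a creation of a single jeep at $p$, immediately followed by a merge of type 1 at position $p$'') is the forward-direction version of it. Your can-counting paragraph handles the constraint that each jeep carries between $0$ and $B$ cans, which is fine, but it does not touch this payment, and without it the translated schedule is not a valid extended backward convoy algorithm. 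The verification that the tank contents at each turning point land in the required ranges $1\le r\le 2$ and $1\le r\le 3$ also needs an argument (the lower bound is not automatic from tank capacities alone); together these two checks are the actual content of the proof, and they are the parts your proposal leaves open.
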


\begin{proof}
The proof is similar to that for Fine jeeps instead of Dewdney jeeps 
in \cite[Prop.\@ 3.1]{opjc}, and therefore omitted.
\end{proof}

The problem of making a normal algorithm for Dewdney jeeps from a 
backward convoy algorithms for Dewdney jeeps will be discussed in 
section \ref{lessDewdney}. For that reason, the word `optimal'
in the theorems below should be read as `at least as good as an
optimal normal algorithm' for the present.

\begin{theorem} \label{Dewdneyconvoy}
Algorithm \ref{Dewdneyjeep} is optimal.
\end{theorem}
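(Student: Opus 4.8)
The plan is to mirror the structure of the proof of Theorem~\ref{Finedepth}, replacing the Fine-jeep analysis with the jeep-merge machinery set up just above. By the preceding proposition, any normal algorithm $S$ for Dewdney jeeps can be converted into an extended backward convoy algorithm $S'$ using the two jeep merges; it therefore suffices to compare algorithm~\ref{Dewdneyjeep} against an arbitrary extended backward convoy algorithm that achieves the same depot-filling effect and the same two-way/one-way split ($n_1+m_1$ jeeps that need not return). So I would let $T$ be the instance of algorithm~\ref{Dewdneyjeep} with the same depots and the same parameters, and show that $T$ uses no more fuel from position $d$ than $S'$ does.

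First I would isolate the special positions: position $d$, position $0$, and every position where a depot has to be filled (these are the analogues of the ``special points'' $G_i$ in Theorem~\ref{Finedepth}; note there are no depots to be used here, by the design choice explained in the text). Splitting both $T$ and $S'$ at these points, I would argue optimality interval by interval by induction from $d$ downwards: on each interval the handler of event~2 keeps the forward-moving convoy always at capacity (one unit of tank fuel per jeep plus $B$ full cans per jeep), so the amount of can fuel plus tank fuel that algorithm~\ref{Dewdneyjeep} delivers to the lower endpoint is maximal among extended backward convoys carrying the prescribed number of single and double jeeps. The key quantitative input is that each jeep merge costs exactly $B$ full cans, so the ``bookkeeping'' of fuel transported versus jeeps cancelled is the same in $S'$ and in $T$; the only freedom $S'$ has is \emph{when} it performs merges and replacements, and pushing every merge as late (as low) as possible — which is what event~2's handler does — is at least as economical. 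The replacement of the oldest double jeep by a single jeep whenever the count of single jeeps would drop below $m_1+n_1$ is exactly the ``order by applicability'' argument: it guarantees $T$ keeps the cheapest-to-maintain fleet compatible with the requirement that $n_1+m_1$ jeeps end in the desert.

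The main obstacle, as in the Fine-jeep case, is the interaction between the ``local'' nature of tank fuel and the need to reassemble the interval pieces $T_i$ into a single normal algorithm without temporary fuel underflows at the special points. Here the subtlety is sharper than for Fine jeeps because tank fuel cannot be pooled: when two double jeeps (or a single and a double) merge, the $r-1$ surviving units of tank fuel must physically sit in the right jeep. I would handle this exactly as in the last paragraphs of the proof of Theorem~\ref{Finedepth}: in the round-trip blocks, order the pieces as forward passes first, then backward passes, so that on each special point the fuel level first only increases and then only decreases; in one-way blocks, split each backward piece $T_{k2}$ at the last visit to the upper endpoint and interleave as $T_{(j-1)1}\cdots T_{i1}\,T_{i21}\cdots T_{(j-1)21}\,T_{(j-1)22}\cdots T_{i22}$. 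The merge operations, read in the forward time direction, become exactly the ``ride past $p$ and come back'' and ``ride back to $p$ and stay'' maneuvers described before the proposition, so each $T_{ij}$ is a genuine normal algorithm and the interleaving has no underflow. Finally, I would note that the stated caveat — that for now ``optimal'' means ``at least as good as an optimal normal algorithm'' — is precisely what this interval-by-interval comparison delivers, with the converse direction deferred to section~\ref{lessDewdney}.
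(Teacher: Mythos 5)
There is a genuine gap. The heart of the paper's proof is an analysis of \emph{jeep merges}, which your argument never really engages with. An arbitrary normal algorithm converts (by the preceding proposition) into an \emph{extended} backward convoy algorithm, and that class allows merges costing $B$ full cans each, whereas algorithm \ref{Dewdneyjeep} performs no merges at all --- its only mechanisms are consuming tank fuel of older jeeps and replacing the oldest double jeep by a single jeep. So the comparison you need is between a convoy that may merge and one that never does, and your two substitutes for this comparison do not hold up. The claim that ``the bookkeeping of fuel transported versus jeeps cancelled is the same in $S'$ and in $T$'' is vacuous because $T$ cancels no jeeps; and ``pushing every merge as late as possible is at least as economical'' is precisely the assertion that requires proof, not an input to it. What the paper actually shows is sharper and structural: under the normalizations that can fuel is consumed directly (so every tank holds at most one unit) and that new jeeps are added only when can fuel is exhausted, a merge can never occur \emph{except} simultaneously with a jeep creation --- otherwise either the jeep count drops below what is needed, or there are fewer than $B\cdot C$ units of can fuel available, too little to pay for the merge. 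A coinciding merge-plus-creation is then simulated exactly by transferring $1-y$ units of tank fuel from the older jeep to the newer one (allowing a change of type between single and double), which is the operation algorithm \ref{Dewdneyjeep} already permits. Only after this reduction does a greedy argument (oldest jeeps single, newest double, jeep count minimal, single count maximal) close the proof. Without the merge-coincides-with-creation lemma and the simulation step, your interval-by-interval induction has nothing to induct on: the quantity you would compare across a special point is not well defined for a convoy whose jeep population can shrink mid-interval.

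Two smaller points. First, your choice of special points (position $d$, position $0$, and depots to be filled) imports machinery from Theorem \ref{Finedepth} that was motivated by depots \emph{to be used} (saturation points, rejoining points); since those are excluded here by design, the decomposition buys you nothing and is not used by the paper. Second, your third paragraph on reassembling the pieces into a normal algorithm without underflows is out of scope for this theorem: the paper explicitly defers that direction to section \ref{lessDewdney}, where it turns out to need additional hypotheses ($m_1=0$ and at least $n$ jeeps) and a genuinely different construction built on to-and-fro's; the interleaving trick from the Fine-jeep proof does not transfer, because tank fuel is local and the ``merge read forwards'' maneuvers force specific jeeps to be physically present.
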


\begin{proof}
Consider an extended backward convoy algorithm for Dewdney jeeps.
We do not transfer can 
fuel to fuel tanks until position $0$: jeep must consume can fuel directly 
instead. So all jeeps have at most one unit of tank fuel each. 
Furthermore, we impose the following harmless assumption on the backward convoy 
algorithm: we postpone adding a jeep to the convoy
until no can fuel is left, except if the added jeep
is merged immediately.

We first show that jeep merges always coincide with jeep 
creations, i.e.\@ each jeep merge implies a jeep creation at the same
place.
Suppose that this is not the case. Say that at the first 
merge that does not coincide with a jeep creation,
some jeep $A$ with $x$ units
of tank fuel is merged with a jeep $B$ with $y$ units of tank 
fuel, where $x \le y$. 

If the number of jeeps of the backward 
convoy is $n$ before the merge, then the merge will make the 
number of jeeps of the convoy too small; contradiction.
If the number of jeeps of the convoy is more
than $n$, then at least one jeep has already been added to the 
initial convoy for another reason than a merge.
The only reason for that jeep addition can be that the convoy
ran out of can fuel. It follows that the amount of can 
fuel is less than $B \cdot C$ now, which is not enough for a
merge; contradiction.

A merge implies a jeep creation at the same place, but both do not cancel out 
in general. Suppose we have a creation of a jeep and a merge of two other
jeeps. The amount of tank fuel of the 
created jeep is equal to one, but the merging jeeps might have other
amounts of tank fuel. Say that at the first merge of the backward convoy,
a jeep $A$ with $x$ units of fuel merges
with a jeep $B$ with $y$ units of tank fuel, where jeep $A$ is created
before jeep $B$. The jeep merge results in a jeep with 
$x + y - 1$ units of tank fuel. Simultaneously, a new jeep with one 
unit of tank fuel must be created. 
Since we do not transfer can fuel to fuel tanks before position 0,
$y \le 1$.
Therefore, the above jeep merge and jeep creation can be simulated
by transferring $1 - y$ units of tank fuel from jeep $A$ to jeep $B$,
if we allow jeeps to change type between single and double jeep.

Subsequent jeep merges can be simulated in a similar matter.
So if we allow that tank fuel is transferred from a jeep $A$ to a jeep $B$
if $A$ is created before $B$, as well as changes of type, then jeep 
merges are no longer necessary.
Instead of transferring tank fuel from jeep $A$ to jeep $B$, we allow jeep
$B$ to consume the tank fuel of jeep $A$ directly, just as in algorithm
\ref{Dewdneyjeep}. 

Since tank fuel may be transferred from older jeeps to 
newer jeeps and single jeeps are more economical than double jeeps, we may 
assume that the oldest jeeps are single jeeps and the 
newest jeeps are double jeeps. This is the case in algorithm \ref{Dewdneyjeep}.
Furthermore, the total number of jeeps is minimal and as many jeeps as possible
are single jeeps all the time in algorithm \ref{Dewdneyjeep}. So algorithm 
\ref{Dewdneyjeep} is optimal.
\end{proof}

\section{A NORMAL ALGORITHM FOR DEWDNEY JEEPS} \label{lessDewdney}

Let us first turn algorithm \ref{Dewdneyjeep} into a normal convoy algorithm under
the assumption that the number of jeeps is unlimited.
The problem that must be overcome is that transferring tank fuel from one jeep
to another is not allowed. Fortunately, the jeep that receives fuel is always
a double jeep and the jeep from which tank fuel is taken is always created earlier 
in algorithm \ref{Dewdneyjeep}. Therefore, the farthest miles of the receiving 
double jeep can be done by to and fro's of the jeeps corresponding to the 
single and double jeeps from which tank fuel is taken. 

However, some of the $n_2$ initial double jeeps might consume tank fuel from
other jeeps, which makes that the $n_2$ round trips might be scattered. This is 
solved by taking all to and fro's together, and next apply the split lemma to 
give each jeep that perform to and for's an interval that corresponds to 
its moment of creation in the backward convoy algorithm. It is tank fuel that is
used for these to and fro's, thus the jeeps doing them do not need to be with each
other to share a can.

Notice that also jeeps
corresponding to double jeeps can be ordered to do to and fro's: the double 
jeep becomes a single jeep later in the backward convoy, saving fuel
with respect to the double jeep that receives the tank fuel. 
As said before, a double jeep that is replaced 
by a single jeep at position $p > 0$ in algorithm \ref{Dewdneyjeep} 
corresponds to a jeep that reaches farther than position $p$ and finally 
ends at position $p$ in a normal algorithm.

So we can replace algorithm \ref{Dewdneyjeep} by a backward convoy algorithm 
where no transfers of tank fuel occur, if we see single and double jeeps 
from which tankfuel is used as triple and quadruple jeeps respectively or 
bigger. Subsequently, we can formulate a normal algorithm for an unlimited number
of jeeps, where those jeeps ride together as far as they are riding on that moment.

Next, assume that there is a limited number of jeeps. Then
double jeeps that are replaced by single jeeps might be a serious problem.
For that purpose, we start with assuming that $m_1 = 0$. The next problem
is that the round trip might be scattered. Therefore, we additionally
assume that there are at least $n$ jeeps.

Thus cosider the case that there are exactly $n$ jeeps. 
Notice that jeeps do not use tank fuel of 
other jeeps any more after the creation of the $(n+1)^{\rm th}$ jeep in the backward 
convoy. Think of the $(n+1)^{\rm th}$ jeep as a round trip before
the convoy rode out, possibly the farthest reaching jeep of another convoy
of at most $n$ jeeps. The $(n+1)^{\rm th}$ jeep is needed for fuel, so a can is 
opened. 

Now the jeeps in the backward convoy can absorb their portions of
the fuel of this can as long as it does not exceed their tanks. If that is
the case, the can does not need to be dragged with the backward convoy and 
it is thus no problem that the convoy is split in sub\-convoys of 
at most $n$ jeeps in a normal algorithm, provided we assume that 
the $(n+1)^{\rm th}$ jeep fills the depots with their portions of fuel from that 
same can in its outward trip.

More generally, making a normal algorithm is straightforward,
as long as the jeeps of the backward convoy can absorb their portions of
fuel of the cans after the creation of the $(n+1)^{\rm th}$ jeep all the time.
So assume that at some point in the backward convoy, a can is opened
with more can fuel than the jeeps can absorb. Say that $x$ is the
position closest to the desert border where this happens. Let $y$
be the position where the next can is opened in the backward convoy,
in case that occurs, and $y = 0$ otherwise.

We can get a normal algorithm as follows. First, the jeeps ride out in 
convoys of at most $n$ jeeps to positions less than $y$, returning to
the desert border, provided $y > 0$. Next, all $n$ jeeps ride to $y$ 
and next to $x$, where some jeeps ride to and fro's between $y$ and $x$, but
no jeep gets too far away from the jeep with the can that is opened at position $x$
in the backward convoy. After that, all riding on positions farther than 
$x$ is done, and induction tells us how, where the desert border is replaced by $x$. 
Finally, zero or more jeeps return to the desert border.

\section{DEWDNEY JEEPS WITH SEALED CANS}

We now consider the problem of Dewdney jeeps with {\em sealed cans}.
In order to take fuel from a sealed can, it must be unsealed.
But after unsealing, a can must not be transported any more. 
It is however not necessary for an unsealed can to be emptied
immediately. 

We first give a backward convoy algorithm for Dewdney jeeps with sealed cans.
Since unsealed cans can be seen as fuel depots to be used, we allow both
additional fuel depots to be used and fuel depots to be filled. 
Depot fuel to be used may be put in the fuel tanks of the jeeps. Can fuel
has to be used to fill depots with the indicated amount of fuel. 

The problem has some similarities with that of the Fine jeep in section 
\ref{Finesec}. For that problem, it was relatively hard for the jeep to 
reach a position with lots of fuel and relatively easy after reaching
this position. Since each jeep in the convoy must reach such a position,
the actual number of jeeps matters. For that reason, we assume that there
are exactly $n$ jeeps, all of which have to reach an oasis,
of which $n_1$ jeeps stay at the oasis and $n_2$ 
jeeps return to the desert border. Double jeeps that are added later
to the backward convoy are to and fro's of the above $n$ jeeps.

For the to and fro's that are global in nature, we add {\em red
double jeeps} for forward loops of the above $n$ jeeps, and
{\em blue double jeeps} for backward loops of the above $n$ jeep.
The red and blue double jeeps are replaced by each other when
the loop ends. Blue double jeeps have some similarities with the 
backward refueling sub\-convoy in section \ref{Finesec}. 

For very local to and fro's,
we introduce another type of jeep, namely {\em spider jeeps}. Spider jeeps are 
double jeeps without a fuel tank, which can consume tank fuel of other
jeeps. From above, a spider jeep that uses tank fuel of eight surrounding 
regular jeeps looks like a spider. Spider jeeps may replace or may be replaced by
red and blue double jeeps.

\begin{algo} \label{Dewdneysealed}
Start with a convoy at position $d$, with $n_1$ single
jeeps with one unit of tank fuel and $n_2$ regular double jeeps with one unit
of tank fuel initially. All jeeps get $B$ sealed cans of $C$ units of 
fuel each. If there is a depot at $d$ to be used, then call the handler of 
event 1 first. Ride to position $0$ with the whole convoy.

Spider jeeps that are created along the way consume fuel 
of any other type of jeep with relatively the most fuel, until all such jeeps
have at most 50 percent of tank fuel. At that point, all spider jeeps are
replaced by red double jeeps.

Blue jeeps that get only one unit of tank fuel are removed from the backward
convoy in case there are $B$ sealed cans of $C$ units available to do so. 
Otherwise, they are replaced by spider jeeps if there is a jeep with more than 
50 percent of tank fuel at that moment and by red jeeps if all jeeps have 
at most 50 percent of tank fuel.

Due to riding, the relative amount of tank fuel decreases at a rate that
is equal for all jeeps. Since spider jeeps affect the relative amount of tank fuel
of the relatively fullest jeep, the difference in relative amount of tank fuel
cannot get larger than 50 percent between two jeeps. Neither do the events below 
affect this difference property. 

\begin{em} 
Event 1: The convoy meets a depot to be used.
\end{em} \\ 
{\em Handler:} 
Distribute the fuel amongst the tanks of regular jeeps and red double jeeps 
in the backward convoy, such that the minimum relative amount of tank fuel becomes as large as 
possible, but do not give fuel when tanks get or are more filled than 50 percent. 
If the relative amount of tank fuel 
for these jeeps gets 50 percent, then replace all red double jeeps by 
spider jeeps.

If there is still fuel left, then 
distribute the fuel amongst all regular jeeps, such that the minimum relative 
amount of tank fuel of the regular jeeps becomes as large as possible.
If there is more fuel than these jeeps can accept, then fill the tank
of the blue double jeep with the most tank fuel.
If there is still fuel left, then advance with the blue double jeep with
the next most tank fuel, etc.

If there is still fuel left,
then replace a spider jeep by a blue double jeep with one unit of tank fuel
and fill that blue jeep up to two units. If this is not enough to absorb
all fuel, then do the same with another spider jeep, etc. If there is still fuel 
left, then create a new blue double jeep with one unit of tank fuel and fill its
tank. Create blue double jeeps until all fuel can be absorbed. 
Advance to $0$.

\begin{em}
Event 2: There is at least one jeep that gets out of fuel and can 
not ride farther any more.
\end{em} \\ 
{\em Handler:}
Notice that there are no jeeps that are more than half filled, and thus no blue 
jeeps or spider jeeps. Unseal a can and do the handler of event 1, seeing the 
opened can as a fuel depot. If there is no can to be unsealed, then create
a new red double jeep with one unit of tank fuel and $B$ sealed cans of $C$
units of fuel each first. Advance to $0$.

\begin{em}
Event 3: The convoy meets a depot to be filled.
\end{em} \\ 
{\em Handler:}
Unseal as many cans as necessary to fill the depot. If there is still 
fuel needed, then create as many spider jeeps with $B$ cans of $C$ units 
as necessary. In the last can, some fuel might remain.

Assume first that there is no jeep with more than 50 percent of tank fuel.
Then replace all (above) spider jeeps by red double jeeps with one unit of tank fuel.
After that, perform the handler of event 1 for the remainder of the last can, 
if there is. Next, advance to 0.

Assume next that some jeep has more than 50 percent of tank fuel.
If some regular double jeep has more than 50 percent of tank fuel and 
a spider jeep was created to provide some cans, then replace
the spider jeep that provided the last can by a blue double jeep
with one unit of tank fuel, and use the remainder of the last can to
fill the tank of this blue double jeep, until the last can is empty or
the blue double jeep has the same amount of fuel as the regular double 
jeep.

Next, if some can fuel remains in the last can, then perform the handler 
of event 1 for it. After that, advance to 0.

Eventually at the desert border, the amount of fuel of all jeeps is made 
equal to one unit. So the result of the algorithm is the number of jeeps
minus the final amount of fuel.
\end{algo}

In algorithm \ref{Dewdneysealed}, the types of jeeps indicate more or less how
a normal algorithm for Dewdney jeeps with sealed cans should look. Furthermore,
the cans do not move any more after being opened, so there is not much difference 
between them in algorithm \ref{Dewdneysealed} and in a normal algorithm.
Nevertheless, making a normal algorithm of algorithm \ref{Dewdneysealed}
is not always possible. 

One problem occurs in event 3, in case some fuel
remains after filling the depot and there are blue double jeeps.
In that case, the can of the remaining fuel must be from a regular jeep
or red jeep, since otherwise the regular jeeps and red jeeps are too early 
to take advantage of the fuel that remains after filling the depot.

Thus is it a good idea to use the cans of blue double jeeps first,
but not those of the blue double jeeps that will be canceled, since
all the $B$ cans of such a jeep are needed for that. Another option is that
cans are transferred from a blue double jeep or a spider jeep to a regular
jeep, but this is not always possible. It is however possible for
blue jeeps at creation and for spider jeeps when there are no blue jeeps.

Talking about the spider jeeps, there is another problem that might occur
when making a normal algorithm of algorithm \ref{Dewdneysealed}. The problem
is that a spider jeep must use tank fuel of blue jeeps before tank fuel
of regular jeeps, since the blue jeeps ride later in a normal algorithm.
Sometimes, even the order of blue jeeps matters. In section \ref{sealednormal}, 
we will however show that this problem can be overcome.

Another problem occurs when in the last mile of algorithm \ref{Dewdneysealed}, 
the miles closest to the desert border,
the amount of tank fuel in one of the double jeeps becomes
larger than $x + 1$ on position $x$. We discuss this problem in the rest of 
this section.

Although the backward convoy algorithm can not always be seen as
a normal algorithm (i.e.\@ more fuel is required for a normal algorithm),
we roughly describe a way to make a normal algorithm
from a backward convoy algorithm first. We start with a convoy of $n_1 + n_2$
jeeps riding the outward part and $n_2$ jeeps riding the return part.
But then, we do not have enough transportation. 

Therefore, we add forward
loops and backward loops to make the normal algorithm complete. 
A blue double jeep implies a backward loop there and a red double
jeep implies a forward loop there.

We now show that algorithm \ref{Dewdneysealed} can not always be seen 
as a normal algorithm. Let $n_1 = 0$ and suppose that there 
are $n_2 = 5$ double jeeps when the backward convoy reaches position
$1$. Suppose that all $5$ jeeps are empty then and there are only $2$ 
cans of $1$ unit of fuel each. Then both cans are unsealed and the 
convoy can ride to $\frac45$ before getting empty again. At position 
$\frac45$, a new double jeep is created and a new can is unsealed. The 
convoy rides farther to position $\frac34$. Next, assume that there is 
an unlimited depot at position $\frac34$. All jeeps can get completely
filled at position $\frac34$ and can ride to position $0$, which they
reach with half a unit of tank fuel each. 

In order to be canceled,
another half a unit of fuel is needed for each jeep, and of course,
the cans that are used in the algorithm. But in a normal algorithm,
the $n_2 = 5$ initial jeeps need more fuel: at least $\frac34$ units of
fuel each to be able to reach the unlimited depot at $\frac34$ from
position $0$. The bound on the amount of fuel is only met if 
returning tank fuel is allowed at position $0$. So it seems a 
good idea, not to allow more than $1 + x$ units of tank fuel 
in initial double jeeps, where $x$ is the position
of the jeeps. Similarly, it seems a good idea to bound the amount of
fuel in an added double jeep by $1 + 2x$, since otherwise such a jeep
reaches position $0$ with more than $1$ unit of tank fuel and can
not be canceled. But on $\frac34$, the amount
of tank fuel of the jeep that is added on position $\frac45$
is bounded by $2$ rather than $\frac52$.

This way, $5 \cdot \frac34 + \frac12 = 4 \frac14$ units of 
fuel are needed at position $0$. But in the following way, the $\frac12$
unit of fuel of the added double jeep can be saved and only $3\frac34$ 
units are needed at position $0$, i.e.\@ $4$ instead of $5$ cans. First, the 
$5$ jeeps ride to position $\frac34$, using $\frac34$ units of fuel
from position $0$ each. Then, they ride to position $1$ and completely
refill there, by way of $\frac54$ units of can fuel. For that purpose,
$2$ cans of $1$ unit of fuel are unsealed at position $1$, so $\frac34$ 
units of can fuel remains. After that, the jeeps go to the oasis and ride 
back to position $1$. Until now, the difference with algorithm 
\ref{Dewdneysealed} is that the can at position $\frac45$ is not
transported yet and can fuel from position $1$ is used instead.

The $5$ jeeps reach position $1$ empty in the return trip. Now,
only $3$ of them ride farther, using the remaining $\frac34$ units
of can fuel at position $1$. These three jeeps just reach the unlimited
depot at position $\frac34$.
From position $\frac34$, the first jeep rides to position $0$, carries a can
from $0$ to $\frac18$ and rides back to $0$. The second jeep rides to 
$\frac18$, carries the can there to $\frac14$ and rides to $0$.
The third jeep fetches the can at position $\frac14$
and rides to position $1$. All
remaining $3$ jeeps ride back to position $0$ using fuel from the last 
can and the unlimited depot.

So we get the following question. Does the above method of restricting
the amount of tank fuel from double jeeps work in case there are no real depots
in the last mile of the backward convoy algorithm, but only unsealed cans?
The answer is affirmative, which the reader may show.

\section{OPTIMALITY RESULTS FOR DEWDNEY JEEPS WITH SEALED CANS}

We first show that jeep merges do not work with depots of fuel to be used.
Let $x$ be a position with a depot of fuel to be used. Say that $2^k$ double
jeeps get completely filled. When those jeeps reach $x - \frac12$, they have
become half filled, and they can merge to $2^{k-1}$ completely filled jeeps.
When these $2^{k-1}$ double jeeps reach $x-1$, they can merge to $2^{k-2}$ 
completely filled jeeps, etc.

For that reason, we do not allow jeep merges in an {\em extended backward 
convoy algorithm for dewdney jeeps with sealed cans}. Instead, we allow a 
double jeep with one unit of tank fuel and $B$ cans with $C$ units of 
fuel to be canceled. In addition, we allow spider jeeps to consume tank fuel 
of other jeeps.

\begin{lemma} \label{graphlem}
Assume $f$ is a nowhere constant function of time with finitely many local 
extrema, of which one local minimum in the interior
of its time interval. If $f$ has a global minimum at the beginning of
its interval and another global extremum at the end, then up to a 
transformation of time, the graph of $f$ has one of the following subgraphs.

\begin{center}
\begin{picture}(80,80)
\put(0,10){\line(1,0){80}}
\put(10,40){\line(1,2){20}}
\put(10,40){\circle*{3}}
\put(30,80){\line(1,-2){20}}
\put(30,80){\circle*{3}}
\put(50,40){\line(1,2){20}}
\put(50,40){\circle*{3}}
\put(70,80){\circle*{3}}
\put(10,10){\dashbox(40,30){}}
\put(30,10){\dashbox(40,70){}}
\put(10,8){\makebox(0,0)[tc]{$t_4$}}
\put(30,8){\makebox(0,0)[tc]{$t_1$}}
\put(50,8){\makebox(0,0)[tc]{$t_2$}}
\put(70,8){\makebox(0,0)[tc]{$t_3$}}
\end{picture}
\quad 
\begin{picture}(10,80)
\put(5,40){\makebox(0,0){or}}
\end{picture}
\quad
\begin{picture}(80,80)
\put(0,10){\line(1,0){80}}
\put(10,80){\line(1,-2){20}}
\put(10,80){\circle*{3}}
\put(30,40){\line(1,2){20}}
\put(30,40){\circle*{3}}
\put(50,80){\line(1,-2){20}}
\put(50,80){\circle*{3}}
\put(70,40){\circle*{3}}
\put(10,10){\dashbox(40,70){}}
\put(30,10){\dashbox(40,30){}}
\put(10,8){\makebox(0,0)[tc]{$t_1$}}
\put(30,8){\makebox(0,0)[tc]{$t_2$}}
\put(50,8){\makebox(0,0)[tc]{$t_3$}}
\put(70,8){\makebox(0,0)[tc]{$t_4$}}
\end{picture}
\end{center}

\end{lemma}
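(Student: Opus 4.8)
The plan is to reduce $f$ to a \emph{normal form} --- a short alternating sequence of strictly monotone runs --- and then to exhibit, by the intermediate value theorem, four times $s_1<s_2<s_3<s_4$ whose graph is one of the two pictured zigzags. (Here I read ``one local minimum in the interior'' as ``exactly one''; if it only means ``at least one'', the normal form acquires more runs and one must first prune it.)

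\smallskip\noindent\emph{Normal form.} Since $f$ is nowhere constant with finitely many local extrema, there are times $a=\tau_0<\tau_1<\cdots<\tau_N=b$ with $f$ strictly monotone on each $[\tau_{i-1},\tau_i]$ and the direction alternating. As $f(a)$ is a global minimum, the run $[\tau_0,\tau_1]$ is increasing; as $f(b)$ is a global extremum, the run $[\tau_{N-1},\tau_N]$ is increasing if $f(b)=\max f$ and decreasing if $f(b)=\min f$. The interior local minima are exactly the $\tau_i$ with $i$ even and $2\le i\le N-1$; since there is exactly one of them and the parity of $N$ is fixed by the direction of the last run, one gets either $N=3$ with $y_0<y_1>y_2<y_3$, $y_0=\min f$, $y_3=\max f$ (\emph{Case A}), or $N=4$ with $y_0<y_1>y_2<y_3>y_4$, $y_0=y_4=\min f$ (\emph{Case B}); here $y_i:=f(\tau_i)$.

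\smallskip\noindent\emph{Locating the zigzag.} In Case A put $s_2=\tau_1$ and $s_3=\tau_2$; since $y_0\le y_2<y_1$ the increasing run $[\tau_0,\tau_1]$ takes the value $y_2$ at some $s_1$, and since $y_2<y_1\le y_3$ the increasing run $[\tau_2,\tau_3]$ takes the value $y_1$ at some $s_4$. Then $s_1<s_2<s_3<s_4$, $f$ is increasing, decreasing, increasing on the three subintervals, and $f(s_1)=f(s_3)=y_2<y_1=f(s_2)=f(s_4)$ --- the first picture. In Case B write $M_1=y_1$, $v=y_2$, $M_2=y_3$, so $\min f=y_0=y_4\le v<\min\{M_1,M_2\}$. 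If $M_1\ge M_2$, put $s_2=\tau_2$, $s_3=\tau_3$, take $s_1\in[\tau_1,\tau_2]$ with $f(s_1)=M_2$ (possible as $v<M_2\le M_1$) and $s_4\in[\tau_3,\tau_4]$ with $f(s_4)=v$ (possible as $y_4\le v<M_2$): here $f$ is decreasing, increasing, decreasing and $f(s_1)=f(s_3)=M_2>v=f(s_2)=f(s_4)$ --- the second picture. If $M_1<M_2$, put $s_2=\tau_1$, $s_3=\tau_2$, take $s_1\in[\tau_0,\tau_1]$ with $f(s_1)=v$ (as $y_0\le v<M_1$) and $s_4\in[\tau_2,\tau_3]$ with $f(s_4)=M_1$ (as $v<M_1<M_2$): now $f$ is increasing, decreasing, increasing and $f(s_1)=f(s_3)=v<M_1=f(s_2)=f(s_4)$ --- the first picture again. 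In every case an increasing homeomorphism of $[s_1,s_4]$ carrying $s_1,s_2,s_3,s_4$ to the four marked abscissae exhibits the required subgraph up to a transformation of time.

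\smallskip\noindent\emph{Main obstacle.} The only place a genuine choice is made is the split in Case B according to whether the first interior maximum $M_1$ dominates the second, $M_2$: because only orientation-preserving time changes are allowed, one ordering forces the first picture and the other forces the second, and in each subcase one must check that the three requirements --- equal ``low'' values at $s_1,s_3$, equal ``high'' values at $s_2,s_4$, and the prescribed up/down pattern --- can be met simultaneously, which is exactly what the intermediate-value selections above do. The remaining, more routine, point is the normal-form bookkeeping: verifying that ``exactly one interior local minimum'' together with the two endpoint conditions really forces $N\in\{3,4\}$ with the stated sign patterns.
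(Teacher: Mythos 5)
Your normal form reduction and the intermediate--value selections in Cases A and B are correct, and they are considerably more explicit than the paper's own proof, which consists of little more than two pictures of a generic zigzag with the relevant window boxed. The problem is exactly the point you flag in your opening parenthesis and then set aside. The lemma is invoked in the following proposition for any jeep $J$ that ``has an interior local minimum'', with no bound on how many it has; so the hypothesis must be read as \emph{at least} one interior local minimum, and your argument covers only $N\in\{3,4\}$ monotone runs. The deferred ``pruning'' is not routine: restricting $f$ to a time window around one chosen valley destroys the endpoint hypotheses (global minimum at the start, global extremum at the end), and for a badly chosen valley neither picture need be realizable there. For instance, with successive extremal heights $0,10,1,5,2,11$ the first interior valley (height $1$) admits neither pattern, while the second (height $2$) does.

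The missing step is a selection argument over the valleys, and it can be run with exactly the comparison you already use to split Case B. Writing the extremal values as $y_0<y_1>y_2<\cdots$, your construction shows that the first picture is realizable with $t_2$ at the valley $\tau_{2i}$ if and only if $y_{2i-2}\le y_{2i}$ and $y_{2i-1}\le y_{2i+1}$, and the second picture with $t_2$ at $\tau_{2i}$ if and only if $y_{2i+1}\le y_{2i-1}$ and $y_{2i+2}\le y_{2i}$. If both fail at every interior valley, then $y_0\le y_2$ forces $y_1>y_3$, hence $y_4>y_2$, hence $y_3>y_5$, hence $y_6>y_4$, and so on; this chain terminates in a contradiction with the endpoint being a global extremum (the last failure would force $y_N>y_{N-2}$ when $y_N=\min f$, and $y_{N-2}>y_N$ when $y_N=\max f$). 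Adding this induction closes the gap; without it the proof does not cover the situation in which the lemma is actually applied.
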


\begin{proof}
Starting at the beginning of the time interval, local maxima and minima vary.
These extrema cannot get closer and closer to each other, so we have the following:
\begin{center}
\begin{picture}(100,80)
\put(0,0){\line(1,4){20}}
\put(20,80){\line(1,-3){20}}
\put(40,20){\line(2,5){20}}
\put(60,70){\line(1,-2){20}}
\put(80,30){\line(2,5){20}}
\put(44,30){\dashbox(52,40){}}
\end{picture}
\quad 
\begin{picture}(10,80)
\put(5,40){\makebox(0,0){or}}
\end{picture}
\quad
\begin{picture}(80,80)
\put(0,0){\line(1,4){20}}
\put(20,80){\line(1,-3){20}}
\put(40,20){\line(2,5){20}}
\put(60,70){\line(1,-3){20}}
\put(23.3333,20){\dashbox(53.3334,50){}}
\end{picture}
\end{center}
\end{proof}

\begin{proposition}
A normal algorithm for Dewdney jeeps with sealed cans can be 
transformed to an extended convoy algorithm for Dewdney jeeps 
with sealed cans.
\end{proposition}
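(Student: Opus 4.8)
The plan is to imitate the (omitted) proof of the corresponding statement for Dewdney jeeps without sealed cans, which itself follows \cite[Prop.\@ 3.1]{opjc}. Given a normal algorithm, we eliminate time: sweeping from the oasis toward the desert border, we record at each position $p$ a backward convoy that accounts for all fuel consumption and all fuel transportation taking place on $[p,d]$, and we check that this backward convoy is a legal extended backward convoy algorithm for Dewdney jeeps with sealed cans that uses no more fuel than the normal algorithm. Since time-elimination is the direction that always works, the substance of the proof is purely organisational: we must show that every feature of the normal algorithm maps to one of the ingredients of algorithm \ref{Dewdneysealed}, namely regular single and double jeeps, red and blue double jeeps, spider jeeps, unsealed cans, depots to be used and depots to be filled, together with the two moves we do allow in the extended setting --- cancelling a double jeep with one unit of tank fuel and $B$ full cans, and letting spider jeeps consume tank fuel of other jeeps.

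First I would dispose of the cans. In a normal algorithm an unsealed can never moves, and in algorithm \ref{Dewdneysealed} an unsealed can is exactly a depot of fuel to be used created in place; hence every can-unsealing in the normal algorithm maps to event~2 (or to event~1 applied to the just-opened can) and the positions of all cans, opened or not, are preserved. Pre-existing depots to be used and depots to be filled transfer verbatim, producing events~1 and~3. Next I would classify the turnarounds. For each jeep, refine its position-versus-time curve so that every maximal excursion away from a position $p$ is either a \emph{forward loop} (leave $p$ toward the oasis, return to $p$) or a \emph{backward loop}; in the backward convoy at $p$, a forward loop contributes a red double jeep and a backward loop a blue double jeep, and when a loop closes the red jeep becomes a blue jeep, a blue jeep is removed, or a full double jeep is cancelled, in accordance with algorithm \ref{Dewdneysealed}. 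Excursions that consist only of a single jeep dipping slightly below $p$ and at once returning, while drawing tank fuel from the jeeps around it, are precisely the situations modelled by spider jeeps. Loops that are scattered in time in the normal algorithm are made contiguous by the Split Lemma, so that every red, blue, or spider jeep owns an interval reflecting its moment of creation in the backward convoy.

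It remains to verify the invariants of algorithm \ref{Dewdneysealed}: the number of jeeps of the backward convoy at $p$ is at most the number the normal algorithm uses on $[p,d]$; no tank holds more than one unit before position~$0$, because can fuel is only poured into tanks at the border; and the relative tank-fuel levels of the various jeeps stay within $50$ percent of each other, with spider jeeps only touching the relatively fullest jeep. I expect this $50$ percent invariant to be the main obstacle, since it is a genuinely new constraint with no analogue in the convoy theory for Fine jeeps or for Dewdney jeeps without sealed cans. The tool for it is Lemma \ref{graphlem}: applied to the tank-fuel-versus-time graph of a jeep that performs a loop --- a nowhere-constant function with finitely many extrema, a global minimum where the loop starts and a global extremum where it ends --- the lemma forces the graph, up to a monotone reparametrisation of time, into one of its two sawtooth shapes, and hence forces the loop to be realisable by a red/blue double-jeep pair together with at most one spider jeep, which keeps the spread in relative tank fuel below $50$ percent throughout. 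Granting Lemma \ref{graphlem}, the remaining bookkeeping --- that a jeep is added to the convoy only when no can fuel is left (unless it is merged away at once), that the oldest jeeps may be taken to be single jeeps, and that no fuel is wasted --- is the same as in \cite[Prop.\@ 3.1]{opjc}, and the transformation goes through.
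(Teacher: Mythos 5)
There is a genuine gap here. The heart of the paper's proof is not the bookkeeping you describe but a surgical reduction on the \emph{position}-versus-time graph of each jeep: Lemma \ref{graphlem} is applied to a jeep's position as a function of time (not to its tank-fuel graph) in order to locate a standard pattern $t_1<t_2<t_3$, $t_4\notin[t_1,t_3]$ around an interior local minimum, and the argument then removes that local minimum by a three-way case analysis on the tank fuel at $t_1$ versus $t_3$. The easy case is when the jeep passes some position twice with the \emph{same} amount of tank fuel --- then the excursion in between can be cut off and replaced by a double jeep. Your proposal implicitly assumes every excursion is of this kind (``every maximal excursion \ldots\ contributes a red double jeep [or] a blue double jeep''). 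But when the fuel levels at the two visits differ, no such cut is legal, and the paper must first rebalance: if the jeep has more fuel at $t_3$ than at $t_1$ it shifts refuelings from $t_2$ to $t_4$ until the equal-fuel case applies; if it has less, it excises part of the round trip to the smallest refuel position and replaces it by a spider jeep consuming the jeep's own tank fuel. These surgeries, together with the termination argument (the number of visits to refuel positions strictly decreases), are the content of the proof, and your proposal does not engage with them.

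Two further points. First, your use of Lemma \ref{graphlem} on the tank-fuel-versus-time graph does not work: that graph need not satisfy the lemma's hypotheses (global minimum at the start of the loop, global extremum at the end, exactly one interior local minimum), and even where it does, the lemma only describes a combinatorial shape --- it says nothing about realisability by a red/blue pair plus one spider jeep, nor about any $50$ percent bound. Second, the $50$ percent invariant is not something this proposition asks you to verify at all: it is a design feature of algorithm \ref{Dewdneysealed} specifically (used later to build a normal algorithm in section \ref{sealednormal}), whereas the target of the present transformation is an arbitrary \emph{extended} backward convoy algorithm, on which no such spread condition is imposed. So the step you identify as ``the main obstacle'' is not an obstacle, while the actual obstacle --- unequal fuel levels at the two endpoints of an excursion --- is passed over.
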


\begin{proof}
Notice first that each jeep can do the things it should do by 
means of finitely many changes of direction. So there
are only finitely many local extrema. Furthermore, a normal 
algorithm for which the jeeps do not have interior local minima 
is already an extended convoy algorithm for Dewdney jeeps 
with sealed cans. So assume that one of the jeeps, say jeep $J$, 
has an interior local minimum. Then the graph of that jeep has a 
subgraph as in lemma \ref{graphlem}, where $t_1 < t_2 < t_3$ and 
$t_4 \notin [t_1, t_3]$. We distinguish three cases:
\begin{itemize}

\item[i)] There is a position in the subgraph where jeep $J$ gets
twice with the same amount of tank fuel. \\
Then we can cut off the part in between the two moments at that 
position from the path of $J$ and replace it by a double jeep.
Since the part that is cut off must contain a refuel position (possibly 
on the edge), the number of times jeep $J$ gets on a refuel position 
decreases.

\item[ii)] Jeep $J$ has more tank fuel on moment $t_3$ than on moment 
$t_1$. \\
Assuming that we do not have case i), we see that jeep $J$ has more fuel
directly after $t_2$ than directly before $t_2$. Thus jeep $J$ refuels
its tank on moment $t_2$. Now move as much as possible of this refueling
to moment $t_4$. This results in a completely filled jeep $J$ before $t_2$,
a completely empty jeep $J$ after $t_2$, or no refueling any more on moment
$t_2$. 

If ii) is not preserved during the process, then restore some refueling 
on moment $t_2$ to make that jeep $J$ has the same amount tank fuel on 
moment $t_3$ as on moment $t_1$, such that case i) applies. So assume that 
ii) is still satisfied. The one can verify that i) applies in all three
cases after transferring tank refueling from $t_2$ to $t_4$.

\item[iii)] Jeep $J$ has less tank fuel on moment $t_3$ than on moment 
$t_1$. \\
Let $p_1$ be the position where the jeep is on $t_2$ and $t_4$, and
$p_2$ that on $t_1$ and $t_3$.
Let $p_3$ be the smallest refuel position of jeep $J$ between
$t_1$ and $t_3$ if there is such a position, and take $p_3 = p_2$
otherwise. Assuming that we do not have case i), we see that jeep $J$ has less fuel
directly before $t_2$ than directly after $t_2$. Thus jeep $J$ does
not refuel on position $p_1$ on moment $t_2$, i.e.\@ $p_3 > p_1$.

Now cut off as much as possible
from the round trip from $p_3$ to $p_1$ by jeep $J$ as possible, and 
replace it by a spider jeep that uses tank fuel of jeep $J$, in the 
neighborhood of $t_4$ where jeep $J$ rides from $p_1$ to $p_3$ 
(in case $t_4 < t_2$) or from $p_3$ to $p_1$ (in case $t_4 > t_2$). 

This results in a completely empty jeep $J$ before $t_2$,
a completely filled jeep $J$ after $t_2$, or that jeep $J$
is on position $p_3$ on moment $t_2$. In the first two cases,
either i) or ii) applies. In the last case, jeep $J$ gets
one time less on refuel position $p_3$ as before if $p_3 < p_2$,
and a better path in case $p_3 = p_2$ is not a refuel position.

\end{itemize}
By repeatedly focusing on interior local minima and reducing them, we get
an algorithm where the jeeps do not have interior local minima.
Thus we finally have an extended backward convoy algorithm for Dewdney jeeps 
with sealed cans.
\end{proof}

\begin{theorem} 
Algorithm \ref{Dewdneysealed} is optimal.
\end{theorem}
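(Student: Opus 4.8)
The plan is to mirror the structure of the proof of Theorem~\ref{Dewdneyconvoy}, but now in the presence of sealed cans and depots, using the two preceding propositions as the entry and exit points. Starting from an optimal normal algorithm, the proposition above lets me pass to an extended backward convoy algorithm for Dewdney jeeps with sealed cans: one in which jeeps change direction only finitely often and have no interior local minima, and in which spider jeeps (but not jeep merges) are permitted. The goal is then to show that every such extended backward convoy algorithm can be rearranged, without increasing fuel consumption, into an instance of algorithm~\ref{Dewdneysealed}. Since making a normal algorithm out of algorithm~\ref{Dewdneysealed} is a separate (and not always possible) matter discussed in section~\ref{sealednormal}, here ``optimal'' should be read, as the text warns, as ``at least as good as an optimal normal algorithm''.

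The first normalisation step is the same harmless manoeuvre as before: postpone transferring can fuel into fuel tanks until position~$0$, so that at every position each regular jeep carries at most one unit of tank fuel; and postpone unsealing a can, and postpone creating a new red double jeep, until no unsealed can fuel remains (unless the fresh jeep is canceled at once). Next I would establish the key accounting invariant that drives algorithm~\ref{Dewdneysealed}: on any interval where no depot is met and no can is opened, the spread in relative tank fuel between the fullest and emptiest jeep cannot exceed $50$ percent. The point is that pure riding shrinks every relative tank level at the same additive rate, so it preserves the spread; a depot or an opened can is handled by the event~1 rule, which pours fuel into the emptiest tanks first and caps them at $50$ percent, hence never creates a jeep above $50$ percent while some jeep is below it; and a spider jeep, by definition, drains tank fuel only from the relatively fullest jeeps down toward $50$ percent, so it too cannot widen the spread. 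This invariant is exactly what justifies the three-way split of the event handlers (all jeeps $\le 50$ percent $\Rightarrow$ red double jeeps; some jeep $> 50$ percent $\Rightarrow$ blue double jeeps and/or spider jeeps), and it also shows that event~2, which fires when a jeep runs dry, indeed happens only when \emph{all} jeeps have at most half a tank, so there are then no blue or spider jeeps to worry about --- precisely as the event~2 handler asserts.

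With the invariant in hand, I would argue optimality by the split-lemma strategy already used in Theorem~\ref{Finedepth}: take the special points to be the positions with depots or opened cans (plus $0$ and $d$), split both the given extended backward convoy algorithm and algorithm~\ref{Dewdneysealed} at those points, and compare interval by interval by induction from the oasis toward the desert border. On an interval with no special point in its interior, there is no depot and no opened can, so the argument reduces to the depot-free Dewdney analysis of Theorem~\ref{Dewdneyconvoy}: spider jeeps coincide with jeep creations by the same counting contradiction (too few jeeps, or not enough can fuel for the $B$ cans a spider jeep costs), tank fuel may be shuffled from older jeeps to newer ones, single jeeps dominate double jeeps, so the oldest jeeps may be taken single and the number of jeeps is minimised --- which is what algorithm~\ref{Dewdneysealed} does. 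At a special point, the event~1 handler distributes the available fuel so as to maximise the minimum relative tank level subject to the $50$ percent cap, then spills the surplus into blue double jeeps and finally into new blue double jeeps; a standard greedy/exchange argument shows this distribution is the one that leaves the least work --- equivalently, the most fuel --- for the continuation on the lower interval, which closes the induction.

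The main obstacle I expect is bookkeeping the interaction of the \emph{colours} and the spider jeeps with the split lemma. In the Fine-jeep proof the only subtlety was a single backward refueling subconvoy and its rejoining points; here a forward loop (red) may turn into a backward loop (blue) and back, blue jeeps may be replaced by spider jeeps and spider jeeps by red or blue jeeps, and --- as the section~\ref{sealednormal} discussion flags --- the \emph{order} among blue jeeps, and whose can a spider jeep drains, can matter. So I would need a careful choice of which positions count as ``special'' (certainly every position where a blue jeep is canceled or a colour change occurs, analogous to the rejoining points) and a verification that between consecutive special points the coloured structure is monotone enough for the exchange arguments to go through. Handling the last mile, where a double jeep's tank fuel might exceed $1+x$ (resp.\ $1+2x$) at position $x$, is relegated to section~\ref{sealednormal} and the affirmative answer quoted there, so I would cite it rather than re-prove it; and as with the earlier ``the reader may show'' claims, the fully detailed case analysis of the event handlers is routine once the $50$ percent invariant and the split-lemma skeleton are set up.
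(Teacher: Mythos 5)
Your plan and the paper's proof diverge completely: the paper does not use the split lemma or an interval-by-interval induction for this theorem at all. Its argument is a short greedy one, entirely about \emph{when cans are unsealed}: it observes that blue jeeps are canceled as soon as enough cans are available (because there are no spider jeeps while $B$ or more full cans remain), that when there are no blue jeeps the unsealing of a can for refueling purposes is postponed as long as possible (and spider jeeps are used so that subsequent unsealings are postponed too), and that consequently the creation of each new red double jeep or spider jeep --- the only thing that ultimately costs jeeps, hence fuel --- is postponed as long as possible; optimality follows. The quantity being greedily optimised is the sequence of can-opening positions, not a per-interval fuel balance.

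The gap in your proposal is precisely the point you flag as ``bookkeeping'' and defer. The split-lemma induction of Theorem~\ref{Finedepth} closes because the state passed across a special point $G_i$ is a single scalar (net fuel dumped on or taken from $G_i$), and because the special points are common to $S$ and $T$ (depots are given data; rejoining points are read off from $T$ alone). Neither holds here. The positions where cans are opened are decided by each algorithm separately, so ``split both algorithms at those points'' is not well defined; and the state at a cut point is not a scalar but includes the inventory of still-sealed cans carried by each jeep and the per-jeep tank levels (tank fuel is local, and whose tank a spider jeep drains matters for feasibility). Your inductive step ``at most as much fuel need be dumped / may be taken'' does not control that richer state, so the induction as stated does not close. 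Relatedly, your transplanted counting argument from Theorem~\ref{Dewdneyconvoy} (``merges coincide with creations'') has no counterpart here: merges are explicitly disallowed in the extended backward convoy algorithm for sealed cans, spider jeeps cost no cans (they consume tank fuel of other jeeps), and it is the cancellation of a double jeep, not a spider jeep, that costs $B$ cans. To repair the proof you would have to argue directly, as the paper does, that algorithm~\ref{Dewdneysealed} postpones every unsealing (and hence every jeep creation) at least as far as any competitor --- at which point the split-lemma scaffolding becomes unnecessary.
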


\begin{proof}
As soon as a blue jeep is created in algorithm \ref{Dewdneysealed},
the backward convoy consumes the tank fuel above the level of 50 percent
first. Furthermore, given that all available fuel on the road is used,
the blue jeeps are canceled as soon as possible if there are enough cans 
to do so. This is because there are no spider jeeps when there
are $B$ or more full cans.

If there are no blue jeeps, then the opening of a can for
refueling purposes is postponed as long as possible in 
algorithm \ref{Dewdneysealed}, and the use of spider jeeps is in 
such a way that subsequent unsealings for refueling are postponed as 
long as possible, too.

Since unsealing cans is postponed as long as possible in algorithm,
\ref{Dewdneysealed}, so is the creation of a new red double jeep or spider 
jeep for getting new cans. Thus algorithm \ref{Dewdneysealed} is optimal.
\end{proof}

\section{A NORMAL ALGORITHM FOR DEWDNEY JEEPS WITH SEALED CANS} \label{sealednormal}

Let $p_0 = 0$ and $p_1, p_2, \ldots$ be the positions $> 0$ where the
regular jeep with the least relative amount of tank fuel has 50 percent 
of tank fuel in algorithm \ref{Dewdneysealed}, in increasing order. 
If the oasis to be reached is not included in the positions
$p_i$, then add that as well, to obtain a finite sequence
$$
0 = p_0 < p_1 < p_2 < \cdots < p_k
$$
If we do not count the to and fro's for spider jeeps, then the general 
scheme is the following. First, the $n$ jeeps do all riding
and transportations within $[0, p_i]$, except for $n_2$ jeeps riding
from $p_i$ to $0$. Next, the jeeps do all riding and 
transportations within $[p_i, p_{i+1}]$ except for $n_2$ jeeps riding
from $p_{i+1}$ to $p_i$. After that, the jeeps do all riding and 
transportation within $[p_{i+1}, p_k]$. Finally, $n_2$ jeeps
ride from $p_{i+1}$ back to $0$.

Notice that red double jeeps have at least as much fuel as regular double jeeps,
but no more than 50 percent. On the other hand, blue double jeeps
have at most as much fuel as regular double jeeps, but no less than 50
percent. From this, it follows that all double jeeps of any type 
have 50 percent of tank fuel on $p_i$ for each $0 < i < k$, and
that the schemes for each $i$ are compatible with each other.

So we only need to
describe the riding and transportations within $[p_i, p_{i+1}]$.
There are two cases: the relatively emptiest regular jeep has
less than 50 percent of tank fuel or this jeep has more than
50 percent of tank fuel in $]p_i, p_{i+1}[$.

Assume first that the relatively emptiest regular jeep has
less than 50 percent of tank fuel in $]p_i, p_{i+1}[$.
Then all regular jeeps with the same multiplicity (single or double)
have the same less than 50 percent of tank fuel in 
$]p_i, p_{i+1}[$. Furthermore, all double jeeps that are created
in $[p_{i+1},p_k]$ have 50 percent of tank fuel on $p_{i+1}$ and
all double jeeps that are created in $]p_i,p_{i+1}[$ are spider jeeps
or red jeeps that have 50 percent of tank fuel when they are created.

So all riding of red double jeeps between $p_i$ and $p_{i+1}$ can be done
by forward loops form $p_i$. Perform these forward loops first  and
ride with $n$ jeeps from $p_i$ to $p_{i+1}$ after that, where the
$n_1$ single jeeps perform the to and fro's of the spider jeeps along 
the road.

If $i = 0$, then the amount of tank fuel
of the jeeps performing the forward loop and regular double jeeps
is $\min\{f(x)/2,x\}$ in the return part on position $x$ and 
$\min\{1-f(x)/2,1-f(x)+x\}$ in the outward part (instead of just 
$f(x)/2$ and $1-f(x)/2$ respectively), 
with $f(x)$ being the amount of tank fuel of 
the red or regular double jeep at hand on position $x$.
This is to ensure that jeeps return empty on position $0$.

Assume next that the relatively emptiest regular jeep has more than
50 percent of tank fuel in $]p_i, p_{i+1}[$.
Then all regular jeeps have more than 50 percent of tank fuel in 
$]p_i, p_{i+1}[$. 

Looking at the backward convoy algorithm \ref{Dewdneysealed}, we can see
how much riding and transportations need to be done on each interval.
With this, we do not count blue double jeeps that are canceled later
together with their $B$ cans. 
These jeeps can be removed from the backward convoy algorithm.
This is because spider jeeps are only present in the convoy when there
are less than $B$ cans in case there are blue jeep, thus the above blue 
jeeps do not need to give tank fuel to other jeeps.

Now that the total number of double jeeps of any type is determined 
everywhere by algorithm \ref{Dewdneysealed}, we replace event 1 of it 
by the following.

\medskip
\begin{em} 
Event 1: The convoy meets a depot to be used.
\end{em} \\ 
{\em Handler:} 
Distribute the fuel amongst the regular jeeps and red double jeeps in the backward 
convoy, such that the minimum relative amount of tank fuel becomes as large as 
possible, but not larger than 50 percent. If the relative amount of tank fuel 
for these jeeps gets 50 percent, then replace all red double jeeps by 
spider jeeps.

If there is still fuel left, then replace all spider jeeps by blue double
jeeps and create additional blue double jeeps until the number of
double jeeps of any type is what it should be. Next, distribute the fuel 
amongst all regular and blue double jeeps, under the following conditions:
\begin{itemize}
\item The regular double jeeps must not get more than $1 + x$ units
of fuel, where $x$ is the current position, unless the regular single 
jeeps and blue double jeeps get 100 percent filled.

\item The minimum amount of tank fuel for all regular jeeps and blue double jeeps is 
maximized as a first priority. Next, the minimum amount of tank fuel for all regular 
single jeeps and blue double jeeps is maximized.
\end{itemize}

Notice that the spider jeeps are replaced by blue double jeeps
on position $p_{i+1}$.
If we ignore the spider jeeps for a moment, then the blue double 
jeeps can be done as backward loops from the points where
they are created or made blue.

So assume that some spider jeep occurs in $]p_i, p_{i+1}[$. The jeeps
do not need tank fuel, since they are more than 50 percent filled 
or have enough to reach 0. Thus there is a depot to be filled. 
This is done by the spider jeep or a blue double jeep that becomes the spider 
jeep later in the backward convoy in $]p_i, p_{i+1}[$. 

Some regular jeeps and blue double jeeps perform the to and fro's of
these spider jeeps, and the order does not matter, since the regular
jeeps and blue double jeeps doing so will not get any fuel as long
as there are spider jeeps and blue jeeps with a less relative amount of
fuel than the regular jeeps.

Therefore, the $n$ regular jeeps start the spidering process, of
which the $n_2$ double jeeps only half of it. Next, the blue double jeeps
follow, and at last the other halves of the $n_2$ double jeep during
their returns to the desert border.

This way, the remainder of the last can will not be available for
the jeeps. But this remainder can be used for the transportation
of some cans by a round trip of one jeep over the last part towards 
the depot, a round trip that can be performed by one of the $n_2$
double jeeps during their returns to the desert border, or any
single jeep in case $n_2 = 0$.

\section{Conclusion}

We attained optimality results for many jeep variants, using
convoy formulations as C.G. Phipps suggested. On the other
hand, many jeep variants are still open. These jeep variants
seem much harder to solve. So a lot of research can still be
done on this topic. 

I wonder what our godfather C.G. Phipps would have thought of
this article. In his article \cite[p.\@ 462]{phipps} he writes
the following.

\begin{quotation}
The number of variations upon these problems is almost endless.
One could have rendezvous points where jeeps are to assemble.
One could consider the delivery of a certain number of jeeps to 
another supply station by caravans meet halfway. Still another
variation would be to have tank-trucks accompany the jeeps. Most
of such problems can be worked by the general principles developed
here.
\end{quotation}

Maybe, C.G. Phipps already knew most of the results attained in this
article.










\end{article}
\end{document}